\newtheorem{theorem}{Theorem}[section]
\newtheorem{lemma}[theorem]{Lemma}
\theoremstyle{definition}
\newtheorem{definition}[theorem]{Definition}
\theoremstyle{remark}
\numberwithin{equation}{section}
\begin{document}

\title[Averaging one-point hyperbolic-type metrics]{Averaging one-point hyperbolic-type metrics}

\author[A. Aksoy]{Asuman G\"{u}ven Aksoy}
\address{Department of Mathematics\\ Claremont McKenna College\\ Claremont, CA, 91711}
\email{aaksoy@cmc.edu}
\author[Z. Ibragimov]{Zair Ibragimov}
\address{Department of Mathematics\\ California State University at Fullerton\\ Fullerton, CA, 92831}
\email{zibragimov@fullerton.edu}
\author[W. Whiting]{Wesley Whiting}
\address{Department of Mathematics\\ California State University at Fullerton\\ Fullerton, CA, 92831}
\email{weswhiting@gmail.com}
\keywords{Averages, Cassinian metrics, one-point metrics, hyperbolic-type metrics, Gromov hyperbolicity}
\subjclass[2000]{Primary 30F45; Secondary 51F99, 30C99}

\begin{abstract}
It is known that the $\tilde j$-metric, half-apollonian metric and scale-invariant Cassinian metric are not Gromov hyperbolic. These metrics are defined as a supremum of {\it{one-point}} metrics (i.e., metrics constructed using one boundary point) and the supremum is taken over all boundary points. The aim of this paper is to show that taking the average instead of the supremum yields a metric that preserves the Gromov hyperbolicity. Moreover, we show that the Gromov hyperbolicity constant of the resulting metric does not depend on the number of metrics used in taking the average.  We also provide an example to show that the average of Gromov hyperbolic metrics is not, in general, Gromov hyperbolic.

\end{abstract}

\maketitle

\section{Introduction}\label{Intro}

The hyperbolic metric has been a powerful tool in planar complex analysis. In higher dimensional Euclidean spaces, the hyperbolic metric exists only in balls and half-spaces and the lack of hyperbolic metric in general domains has been a primary motivation for introducing the so-called {\it{hyperbolic-type}} metrics in geometric function theory. Examples of such metrics include $\tilde j$-metric, Apollonian metric, Seittenranta's metric, half-apollonian metric, scale-invariant Cassinian metric and M\"obius-invariant Cassinian metric (see \cite{Bea98,Has06,HIL06,HL,Ibr1,Ib1,Ib2,L07,Sei99,Vuo88} and the references therein). All these metrics are so-called {\it{point-distance metrics}} meaning that they are defined in terms of distance functions and can be classified into {\it{one-point}} metrics or {\it{two-point}} metrics based on the number of boundary points used in their definitions. For example, the Apollonian, Seittenranta and the M\"obius-invariant Cassinian metrics are two-point, point-distance metrics while the $\tilde j$-metric, half-apollonian metric and the scale-invariant Cassinian metric are all one-point, point-distance metrics. 

One of the key features of hyperbolic-type metrics is the Gromov hyperbolicity property. The Apollonian, Seittenranta and M\"obius-invariant Cassinian metrics are roughly similar to each other and, in particular, they are all Gromov hyperbolic (see \cite[Theorem 4.8 and Theorem 5.4]{Ib2}). The $\tilde j$-metric, half-apollonian and scale-invariant Cassinian metrics are also roughly similar to each other. However, they are Gromov hyperbolic if and only if the underlying domain has only one boundary point (\cite[Theorem 3.3 and Theorem 3.5]{Ib1}). In other words, the one-point versions of these metrics are Gromov hyperbolic but the metrics themselves, defined as the supremums of their one-point versions, are not Gromov hyperbolic.

We briefly mention a general approach to constructing one-point hyperbolic-type metrics in the settings of Euclidean spaces. Let $D\subset\mathbb R^n$ be any domain with non-empty boundary $\partial D$. To construct a one-point hyperbolic-type metric $d_D$ on $D$, one first constructs a Gromov hyperbolic metric $d_p$ on the one-punctured space $\mathbb R^n\setminus\{p\}$ for each $p\in\mathbb R^n$ and then defines $d_D$ by $d_D(x,y)=\sup\{d_p(x,y)\colon\, p\in\partial D\}$. Taking a supremum in this context is very natural since the boundary $\partial D$ is usually uncountable. However, as it turns out, the Gromov hyperbolicity of $d_p$ is not preserved when taking the supremum.

In this paper we propose an alternative approach to constructing a metric from the one-point metrics mentioned above. Namely, we propose to take the average of these one-point metrics instead of taking their supremum. As mentioned above, these metrics are roughly similar to each other and hence so are their averages. Therefore, here we only consider the one-point scale-invariant Cassinian metrics. The main result of this paper states that the average of finitely many, one-point scale-invariant Cassinian metrics is Gromov hyperbolic and, more importantly, its Gromov hyperbolicity constant does not depend on the number of metrics (Lemma~\ref{one-point} and Theorem~\ref{T1}). Even though we consider here the averages of finitely many metrics, the Gromov hyperbolicity constant being independent of the number of metrics allows one to consider domains which are the complements of certain self-similar sets (\cite{H}). 

To the best of our knowledge, averaging one-point metrics has not been considered before. However, germs of this idea can be traced back to the work of F.W. Gehring and B. Osgood. More specifically, let $D$ be a proper subdomain of $\mathbb R^n$. Then the $j_D$-metric (see, \cite[p. 51]{GO79}), 
$$
j_D(x,y)=\frac {1}{2}\Big[\log\Big(1+\frac {|x-y|}{\operatorname{dist}(x,\partial D)}\Big)+\log\Big(1+\frac {|x-y|}{\operatorname{dist}(y,\partial D)}\Big)\Big],
$$
which is defined as average, is Gromov hyperbolic (\cite[Theorem 1]{Has06}). As mentioned above, the $\tilde j_D$-metric, 
$$
\tilde j_D(x,y)=\sup\Big\{\log\Big(1+\frac {|x-y|}{\operatorname{dist}(x,\partial D)}\Big),\ \ \log\Big(1+\frac {|x-y|}{\operatorname{dist}(y,\partial D)}\Big)\Big\},
$$
which is defined as supremum, is not Gromov hyperbolic (\cite[Theorem 3]{Has06}). (Note that in \cite{Has06} the author denotes the $j$-metric by $\tilde j$ and the $\tilde j$-metric by $j$).

To formulate the main results of the paper, let $(X,d)$ be arbitrary metric space. For each $p\in X$, we define a distance function $\tau_p$ on $X\setminus\{p\}$, by
\begin{equation}\label{aaEq2.1}
\tau_p(x,y)=\log\Big(1+2\frac {d(x,y)}{\sqrt{d(x,p)d(y,p)}}\Big).
\end{equation}
For $p_1,p_2,\dots,p_k \in  X$ and $D= X\setminus\{p_1,p_2,\dots,p_k\}$, we define a metric $\tau_D$ on $D$ by taking the simple average of the metrics $\tau_{p_i}$, namely,
$$
\hat\tau_D(x,y)=\frac {1}{k}\sum_{i=1}^{k}\tau_{p_i}(x,y)=\frac {1}{k}\sum_{i=1}^{k}\log\Big(1+ 2\frac {d(x,y)}{\sqrt{d(x,p_i)d(y,p_i)}}\Big).
$$
We prove that for each $p\in X$, the metric $\tau_p$ is Gromov hyperbolic with $\delta=\log 3+\log 2$ (Lemma~\ref{one-point}) and that for any $k\geq 1$, the metric $\hat\tau_D(x,y)$ is Gromov hyperbolic with $\delta= 3 \log 3+2 \log 2$ (Theorem~\ref{T1}).  The latter is an unexpected result since we also provide an example to demonstrate that the average of  two Gromov-hyperbolic metrics is not necessarily Gromov-hyperbolic (Lemma~\ref{sum of metrics is not GH}).


\section{One-point, scale-invariant Cassinian metric on general metric spaces}\label{Cassin}

In this section we define one-point, scale-invariant Cassinian metrics in the context of arbitrary metric spaces and in Section~\ref{Gromov Hyperbolic} we study Gromov hyperbolicity of the average of finitely many such metrics.
Let $(X,d)$ be an arbitrary metric space. For each $p\in X$, we define a distance function $\tau_p$ on $X\setminus\{p\}$ by
\begin{equation}\label{aaEq2.1}
\tau_p(x,y)=\log\Big(1+2\frac {d(x,y)}{\sqrt{d(x,p)d(y,p)}}\Big).
\end{equation}

\begin{theorem}\label{GH}
Let $(X,d)$ be an arbitrary metric space and let $p\in X$ be an arbitrary point. Then the distance function $\tau_p$ is a metric on $X\setminus\{p\}$. 
\end{theorem}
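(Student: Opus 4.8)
The plan is to verify the three metric axioms for $\tau_p$. The first two—nonnegativity with $\tau_p(x,y)=0$ iff $x=y$, and symmetry—will be immediate. Since $d(x,y)\geq 0$ and vanishes exactly when $x=y$, the argument of the logarithm is $\geq 1$ with equality iff $d(x,y)=0$, so $\tau_p(x,y)=\log 1=0$ precisely when $x=y$; positivity elsewhere follows since $\log$ is increasing. Symmetry is inherited directly from the symmetry of $d$ in the numerator and the symmetric product $d(x,p)d(y,p)$ in the denominator.

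\begin{proof}
The main work is the triangle inequality: for $x,y,z\in X\setminus\{p\}$, I must show $\tau_p(x,z)\leq\tau_p(x,y)+\tau_p(y,z)$. Exponentiating, this is equivalent to
$$
1+2\frac{d(x,z)}{\sqrt{d(x,p)d(z,p)}}\leq\Big(1+2\frac{d(x,y)}{\sqrt{d(x,p)d(y,p)}}\Big)\Big(1+2\frac{d(y,z)}{\sqrt{d(y,p)d(z,p)}}\Big).
$$
To organize this, I would introduce the abbreviations $a=d(x,p)$, $b=d(y,p)$, $c=d(z,p)$ for the three distances to the base point, together with the pairwise distances $d(x,y)$, $d(y,z)$, $d(x,z)$. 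Multiplying the target inequality through by $\sqrt{ac}$ and expanding the right-hand product, the claim reduces to showing that
$$
d(x,z)\leq d(x,y)\sqrt{\tfrac{c}{b}}+d(y,z)\sqrt{\tfrac{a}{b}}+2\,d(x,y)\,d(y,z)\tfrac{\sqrt{ac}}{b},
$$
after cancelling the common constant term and dividing appropriately; I would double-check the exact bookkeeping of the radical factors when carrying this out in full.

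The key observation that powers the estimate is the standard triangle inequality $d(x,z)\leq d(x,y)+d(y,z)$ for the original metric $d$. The plan is therefore to bound the right-hand side of the reduced inequality from below by $d(x,y)+d(y,z)$, which would finish the proof via this inequality. This amounts to checking $\sqrt{c/b}\geq 1$ or compensating terms when it is not, and symmetrically for $\sqrt{a/b}$; since $b$ may be larger than both $a$ and $c$, neither radical coefficient is individually $\geq 1$ in general, so the cross term $2\,d(x,y)\,d(y,z)\sqrt{ac}/b$ must absorb the deficit. I expect the crucial algebraic fact to be an AM–GM-type estimate showing that the sum of the two linear terms plus the cross term dominates $d(x,y)+d(y,z)$, exploiting that $\sqrt{c/b}+\sqrt{a/b}$ together with the product term compensates whenever one radical is small. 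The factor $2$ in the definition of $\tau_p$ is presumably calibrated precisely to make this compensation work, which is why I expect the sharpest part of the argument to be verifying that the cross term carries enough weight.

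The main obstacle, then, is not any single axiom but the triangle inequality in the regime where the intermediate point $y$ is far from $p$ relative to $x$ and $z$ (so $b\gg a,c$), forcing reliance on the quadratic cross term rather than the linear terms. Once the reduced polynomial inequality in $a,b,c$ and the three pairwise distances is established—using only $d(x,z)\leq d(x,y)+d(y,z)$ and nonnegativity—the result follows, and I would present the final verification as a short chain of inequalities after the substitution above.
\end{proof}
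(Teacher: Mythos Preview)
Your outline is essentially the paper's argument, just with the algebra arranged differently. Two corrections will let you close it. First, the cross term is off: since $\sqrt{ab}\cdot\sqrt{bc}=b\sqrt{ac}$, after you multiply through by $\sqrt{ac}$ the quadratic term is $2\,d(x,y)\,d(y,z)/b$, with no $\sqrt{ac}$ factor. Your reduced target then reads
\[
d(x,y)+d(y,z)\ \le\ d(x,y)\sqrt{\tfrac{c}{b}}+d(y,z)\sqrt{\tfrac{a}{b}}+\frac{2\,d(x,y)\,d(y,z)}{b},
\]
and multiplying this by $b/\big(d(x,y)\,d(y,z)\big)$ gives exactly the inequality the paper proves, which it decouples into the two one-variable estimates $b-\sqrt{ab}\le d(x,y)$ and $b-\sqrt{cb}\le d(y,z)$.

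Second, the tool that finishes the job is not an AM--GM estimate but the \emph{reverse} triangle inequality $d(x,y)\ge |b-a|$. When $b\ge a$ this gives $d(x,y)\ge b-a\ge b-\sqrt{ab}=\sqrt{b}(\sqrt{b}-\sqrt{a})$, hence $1-\sqrt{a/b}\le d(x,y)/b$; when $b\le a$ the left side is nonpositive and there is nothing to do. The symmetric bound with $c$ handles the other term, and summing the two yields precisely your target. So your intuition that the quadratic cross term absorbs the deficit in the regime $b\gg a,c$ is correct, but the mechanism you should name is $d(x,y)\ge|d(x,p)-d(y,p)|$, which is exactly the case analysis the paper carries out.
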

\begin{proof}
Clearly, $\tau_p(x,y)\geq 0$, $\tau_p(x,y)=\tau_p(y,x)$ and $\tau_p(x,y)=0$ if and only if $x=y$. So it is enough to show that the triangle inequality holds. That is,
\begin{equation}\label{eq3.2}
\tau_p(x,y)\leq\tau_p(x,z)+\tau_p(z,y)
\end{equation}
for all $x,y,z\in D$. Inequality (\ref{eq3.2}) is equivalent to
$$
\frac {d(x,y)}{\sqrt{d(x,p)d(y,p)}}\leq\frac {d(x,z)}{\sqrt{d(x,p)d(z,p)}}+\frac {d(z,y)}{\sqrt{d(z,p)d(y,p)}}+2\frac {d(x,z)d(z,y)}{d(z,p)\sqrt{d(x,p)d(y,p)}}
$$
or, equivalently,
\begin{equation}
\frac {d(x,y)d(z,p)}{d(x,z)d(y,z)}\leq\frac {\sqrt{d(x,p)d(z,p)}}{d(x,z)}+\frac {\sqrt{d(y,p)d(z,p)}}{d(y,z)}+2.
\end{equation}
Since 
$$
\frac {d(x,y)d(z,p)}{d(x,z)d(y,z)}\leq\frac {d(y,z)d(z,p)}{d(x,z)d(y,z)}+\frac {d(x,z)d(z,p)}{d(x,z)d(y,z)}=\frac {d(z,p)}{d(x,z)}+\frac {d(z,p)}{d(y,z)},
$$
it suffices to show that
$$
\frac {d(z,p)}{d(x,z)}\leq\frac {\sqrt{d(x,p)d(z,p)}}{d(x,z)}+1\quad\text{and}\quad\frac {d(z,p)}{d(y,z)}\leq\frac {\sqrt{d(y,p)d(z,p)}}{d(y,z)}+1.
$$
Due to symmetry, it suffices to prove the first inequality. If $d(z,p)\leq d(x,p)$, then
$$
\frac {d(z,p)}{d(x,z)}\leq\frac {\sqrt{d(x,p)d(z,p)}}{d(x,z)}<\frac {\sqrt{d(x,p)d(z,p)}}{d(x,z)}+1.
$$
If $d(x,p)\leq d(z,p)$. Then
$$
\frac {d(z,p)}{d(x,z)}\leq\frac {d(x,z)+d(x,p)}{d(x,z)}\leq\frac {d(x,z)+\sqrt{d(x,p)d(z,p)}}{d(x,z)}=\frac {\sqrt{d(x,p)d(z,p)}}{d(x,z)}+1,
$$
completing the proof.
\end{proof}

One can easily see that for all $x,y\in X\setminus\{p\}$ we have
\begin{equation}\label{aaaEq2.2}
\tilde\tau_p(x,y)\leq\tau_p(x,y)\leq\tilde\tau_p(x,y)+\log 2.
\end{equation}
Here
\begin{equation}\label{aaEq2.2}
\tilde\tau_p(x,y)=\log\Big(1+\frac {d(x,y)}{\sqrt{d(x,p)d(y,p)}}\Big)=\log\frac {\mu_p(x,y)}{\sqrt{d(x,p)d(y,p)}}.
\end{equation}
The distance function $\tilde\tau_p$ was introduced and studied in the context of Euclidean spaces in (\cite{Ib1}), where it was referred to as one-point, scale-invariant Cassinian metric. However, $\tilde\tau_p$ is not a metric in the context of general metric spaces. Indeed, let $X=\{p,x,y,z\}$ and define $d(p,x)=d(y,z)=2$, $d(p,y)=d(p,z)=d(x,y)=d(x,z)=1$. Clearly, $d$ is a metric on $X$. One can easily see that $\tilde\tau_p(y,z)>\tilde\tau_p(x,y)+\tilde\tau_p(x,z)$. Therefore, $\tilde\tau_p$ is not a metric on $X\setminus\{p\}$ justifying the introduction of its modified version $\tau_p$. However, it turns out that, if $(X,d)$ is a {\it{Ptolemaic}} metric space, then $\tilde\tau_p$ is a metric on $X\setminus\{p\}$ for each $p\in X$. Recall that a metric space $(X,d)$ is called Ptolemaic if
\begin{equation}\label{aaaaEq2.2}
d(x,y)d(z,w)\leq d(x,z)d(y,w)+d(x,w)d(y,z)
\end{equation}
for all $x,y,z,w\in X$.

\begin{theorem}\label{3.1}
Let $(X,d)$ be a Ptolemaic metric space and let $p\in X$ be an arbitrary point. Then the distance function $\tilde\tau_p$ is a metric on $X\setminus\{p\}$. 
\end{theorem}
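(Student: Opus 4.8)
The plan is to verify the metric axioms for $\tilde\tau_p$ on $X\setminus\{p\}$. Positivity, symmetry, and the equivalence $\tilde\tau_p(x,y)=0\iff x=y$ are immediate from the definition and from $d$ being a metric, exactly as in the first part of Theorem~\ref{GH}; so the entire content is the triangle inequality
\[
\tilde\tau_p(x,y)\le\tilde\tau_p(x,z)+\tilde\tau_p(z,y).
\]
Because $\tilde\tau_p=\log(1+\,\cdot\,)$ and the logarithm converts the right-hand sum into a product, the first step is to exponentiate and reduce this to the algebraic inequality
\[
1+\frac{d(x,y)}{\sqrt{d(x,p)d(y,p)}}\le\Big(1+\frac{d(x,z)}{\sqrt{d(x,p)d(z,p)}}\Big)\Big(1+\frac{d(z,y)}{\sqrt{d(z,p)d(y,p)}}\Big).
\]
Expanding the product and clearing denominators, this is in turn equivalent to
\[
d(x,y)\,d(z,p)\le d(x,z)\sqrt{d(y,p)d(z,p)}+d(z,y)\sqrt{d(x,p)d(z,p)}+d(x,z)\,d(z,y),
\]
which, writing $a=d(x,p)$, $b=d(y,p)$, $c=d(z,p)$, $u=d(x,y)$, $v=d(x,z)$, $w=d(z,y)$, is the single inequality $uc\le v\sqrt{bc}+w\sqrt{ac}+vw$ that I must establish.

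The tools available are the Ptolemaic inequality applied to the quadruple $x,y,z,p$, namely $uc\le vb+aw$, together with the triangle inequality of $d$ in its two guises $u\le v+w$ and (applied through $p$) $v\ge|a-c|$, $w\ge|b-c|$; we may assume $x,y,z$ distinct and distinct from $p$, so all of $a,b,c,v,w$ are positive. The essential difference from Theorem~\ref{GH} is that the constant term is now the \emph{single} product $vw$ rather than $2vw$, so the cruder estimate used there (which yields the coefficient $2$) no longer suffices and the Ptolemaic hypothesis must be used in an essential way. I would prove $uc\le v\sqrt{bc}+w\sqrt{ac}+vw$ by comparing $c$ with $a$ and $b$ and exploiting the monotonicity of $t\mapsto\sqrt t$. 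If $c\ge a$ and $c\ge b$, then $\sqrt{bc}\ge b$ and $\sqrt{ac}\ge a$, so $v\sqrt{bc}+w\sqrt{ac}\ge vb+aw\ge uc$ by Ptolemy, and this case closes without even needing the $vw$ term. If instead $c\le a$ and $c\le b$, then $\sqrt{bc}\ge c$ and $\sqrt{ac}\ge c$, so $v\sqrt{bc}+w\sqrt{ac}\ge(v+w)c\ge uc$ by the ordinary triangle inequality.

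The remaining, and genuinely trickier, situation is the mixed one, say $a\ge c\ge b$ (the case $b\ge c\ge a$ being symmetric). Here I would start from the triangle bound $uc\le(v+w)c$ and use $\sqrt{ac}\ge c$ to absorb the term $wc$, reducing the claim to $vc\le v\sqrt{bc}+vw$, i.e. $c\le\sqrt{bc}+w$. This is exactly where the reverse triangle inequality $w\ge c-b$ enters: combined with $\sqrt{bc}\ge b$ (valid since $c\ge b$) it gives $\sqrt{bc}+w\ge b+(c-b)=c$, as required, and this is the one case in which the constant term $vw$ is actually used.

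I expect this mixed case—and, more fundamentally, the need to sharpen the constant from $2$ to $1$—to be the main obstacle: no single one of the three inequalities suffices across all configurations, the Ptolemaic hypothesis being indispensable precisely when $d(z,p)$ is largest, while the mixed regime forces one to interlock the ordinary and reverse triangle inequalities. Once the displayed inequality $uc\le v\sqrt{bc}+w\sqrt{ac}+vw$ is secured in all four cases, reversing the reduction (clearing denominators and taking logarithms) delivers the triangle inequality for $\tilde\tau_p$ and completes the proof.
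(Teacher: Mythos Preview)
Your proposal is correct and follows essentially the same route as the paper. After the identical reduction to the algebraic inequality $uc\le v\sqrt{bc}+w\sqrt{ac}+vw$, the paper (fixing $a\le b$ without loss of generality) performs the same three-way case split on the position of $c$ relative to $a$ and $b$: Ptolemy alone when $c$ is largest, the ordinary triangle inequality alone when $c$ is smallest, and in the mixed case the same combination of the ordinary triangle inequality with the estimate $c\le\sqrt{(\min\{a,b\})\,c}+|c-\min\{a,b\}|$ obtained from the reverse triangle inequality through $p$.
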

\begin{proof}
Clearly, it is enough to show that the triangle inequality holds. That is,
\begin{equation}\label{eq3.22}
\tilde\tau_p(x,y)\leq\tilde\tau_p(x,z)+\tilde\tau_p(z,y)
\end{equation}
for all $x,y,z\in X\setminus\{p\}$. Inequality (\ref{eq3.22}) is equivalent to
\begin{equation*}\label{eq3.3}
\Big(1+\frac {d(x,y)}{\sqrt{d(x,p)d(y,p)}}\Big)\leq\Big(1+\frac {d(x,z)}{\sqrt{d(x,p)d(z,p)}}\Big)\Big(1+\frac {d(z,y)}{\sqrt{d(z,p)d(y,p)}}\Big),
\end{equation*}
which is equivalent to
\begin{equation}\label{eq3.4}
\begin{split}
\frac {d(x,y)}{\sqrt{d(x,p)d(y,p)}}&\leq\frac {d(x,z)}{\sqrt{d(x,p)d(z,p)}}+\\
&+\frac {d(z,y)}{\sqrt{d(z,p)d(y,p)}}+\frac {d(x,z)d(z,y)}{d(z,p)\sqrt{d(x,p)d(y,p)}}.
\end{split}
\end{equation}
Without loss of generality we can assume that $d(x,p)\leq d(y,p)$. 

If $d(z,p)\leq d(x,p)\leq d(y,p)$, then 
$$
\sqrt{d(x,p)d(y,p)}\geq \sqrt{d(x,p)d(z,p)}\qquad\text{and}\qquad \sqrt{d(x,p)d(y,p)}\geq \sqrt{d(z,p)d(y,p)}.
$$
By the triangle inequality we then obtain
\begin{equation*}
\begin{split}
\frac {d(x,y)}{\sqrt{d(x,p)d(y,p)}}&\leq\frac {d(x,z)}{\sqrt{d(x,p)d(y,p)}}+\frac {d(z,y)}{\sqrt{d(x,p)d(y,p)}}\\
&\leq\frac {d(x,z)}{\sqrt{d(x,p)d(z,p)}}+\frac {d(z,y)}{\sqrt{d(z,p)d(y,p)}},
\end{split}
\end{equation*}
establishing (\ref{eq3.4}). 

If $d(x,p)\leq d(y,p)\leq d(z,p)$, then
$$
d(z,p)d(x,y)\leq d(y,p)d(x,z)+d(x,p)d(z,y)
$$
by the Ptolemy's Inequality. Since $d(x,p)\leq d(z,p)$ and $d(y,p)\leq d(z,p)$, we have 
$$
d(x,p)\leq\sqrt{d(x,p)d(z,p)}\qquad\text{and}\qquad d(y,p)\leq\sqrt{d(y,p)d(z,p)}. 
$$
Hence
$$
d(z,p)d(x,y)\leq \sqrt{d(y,p)d(z,p)}d(x,z)+\sqrt{d(x,p)d(z,p)}d(z,y).
$$
Consequently,
$$
\frac {d(x,y)}{\sqrt{d(x,p)d(y,p)}}\leq\frac {d(x,z)}{\sqrt{d(x,p)d(z,p)}}+\frac {d(z,y)}{\sqrt{d(z,p)d(y,p)}},
$$
establishing (\ref{eq3.4}).

Finally, if $d(x,p)\leq d(z,p)\leq d(y,p)$, then $d(x,p)\leq\sqrt{d(x,p)d(z,p)}$ since $d(x,p)\leq d(z,p)$. By the triangle inequality we have $d(z,p)\leq d(x,p)+d(x,z)$. Hence 
$$
d(z,p)\leq\sqrt{d(x,p)d(z,p)}+d(x,z),
$$
or, equivalently,
$$
\frac {1}{\sqrt{d(x,p)}}\leq\frac {1}{\sqrt{d(z,p)}}+\frac {d(x,z)}{d(z,p)\sqrt{d(x,p)}}.
$$
Thus,
\begin{equation}\label{eq3.5}
\frac {d(z,y)}{\sqrt{d(x,p)d(y,p)}}\leq\frac {d(z,y)}{\sqrt{d(z,p)d(y,p)}}+\frac {d(x,z)d(z,y)}{d(z,p)\sqrt{d(x,p)d(y,p)}}.
\end{equation}
Now by the triangle inequality we have
\begin{equation}\label{eq3.6}
\frac {d(x,y)}{\sqrt{d(x,p)d(y,p)}}\leq\frac {d(x,z)}{\sqrt{d(x,p)d(y,p)}}+\frac {d(z,y)}{\sqrt{d(x,p)d(y,p)}}
\end{equation}
Also, since $d(z,p)\leq d(y,p)$, we have
\begin{equation}\label{eq3.7}
\frac {d(x,z)}{\sqrt{d(x,p)d(y,p)}}\leq\frac {d(x,z)}{\sqrt{d(x,p)d(z,p)}}.
\end{equation}
Therefore, combining inequalities (\ref{eq3.5}), (\ref{eq3.6}) and (\ref{eq3.7}), we see that inequality (\ref{eq3.4}) holds also in this case. The proof is complete.
\end{proof}

\begin{definition}
In the context of a general metric space $(X,d)$, the metrics $\tau_p$, $p\in X$, are called one-point, scale-invariant Cassinian metrics. 
\end{definition}


\section{Technical results}\label{T}

In this section we establish several results needed in Section~\ref{Gromov Hyperbolic}. Throughout this section we let $(X,d)$ be an arbitrary metric space. Fix a point $p\in X$ and define
$$
\mu_p(x,y)=d(x,y)+\sqrt{d(x,p)d(y,p)}\qquad\text{for}\quad x,y\in X.
$$
In this section we study some properties of $\mu_p$, especially Lemma~\ref{lem0} and Lemma~\ref{lem3}, which will be needed in Section~\ref{Gromov Hyperbolic}. In what follows, we set 
$$
a\wedge b=\min\{a,b\}\qquad\text{and}\qquad a\vee b=\max\{a,b\}
$$ 
for non-negative real numbers $a$ and $b$. Observe that 
\begin{equation}\label{max}
(a\vee b)(c\vee d)=ac\vee ad\vee bc\vee bd
\end{equation}
for all non-negative real numbers $a,b,c,d$. 

\begin{lemma}\label{lem0}
For all $x,y,z,w\in X$ we have
 \begin{equation}\label{5.4}
\mu_p(x,y)\mu_p(z,w)\leq 9\big[\mu_p(x,z)\mu_p(y,w)\vee\mu_p(x,w)\mu_p(y,z)\big]
\end{equation}
\end{lemma}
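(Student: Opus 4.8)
The plan is to recognize (\ref{5.4}) as a Ptolemy-type inequality with a multiplicative constant and to reduce it to a purely order-theoretic fact about four non-negative numbers. The bridge between the metric statement and the scalar statement is a two-sided comparison of $\mu_p(x,y)$ with the distances of $x$ and $y$ to the base point $p$. Throughout I would abbreviate $m(u)=d(u,p)$ for $u\in X$.

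First I would establish the key estimate
$$
m(x)\vee m(y)\ \le\ \mu_p(x,y)\ \le\ 3\big(m(x)\vee m(y)\big)\qquad\text{for all }x,y\in X.
$$
The upper bound follows from the triangle inequality $d(x,y)\le m(x)+m(y)$ together with $\sqrt{m(x)m(y)}\le m(x)\vee m(y)$, since $m(x)+m(y)\le 2\,(m(x)\vee m(y))$. For the lower bound I would assume without loss of generality $m(x)\ge m(y)$, so that $m(x)\vee m(y)=m(x)$; the triangle inequality then gives $d(x,y)\ge m(x)-m(y)$, and since $\sqrt{m(x)m(y)}\ge m(y)$ (because $m(x)\ge m(y)\ge 0$), I obtain $\mu_p(x,y)\ge m(x)-m(y)+\sqrt{m(x)m(y)}\ge m(x)$. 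This is the only place where the metric structure is used, and I expect it to be the crux of the whole argument; everything afterwards is formal.

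Applying the upper bound to the left-hand side of (\ref{5.4}) yields
$$
\mu_p(x,y)\mu_p(z,w)\ \le\ 9\,\big(m(x)\vee m(y)\big)\big(m(z)\vee m(w)\big),
$$
while applying the lower bound to each factor on the right gives $\mu_p(x,z)\mu_p(y,w)\ge (m(x)\vee m(z))(m(y)\vee m(w))$ and the analogous bound for $\mu_p(x,w)\mu_p(y,z)$. Writing $a=m(x),\,b=m(y),\,c=m(z),\,e=m(w)$, it therefore suffices to prove the scalar inequality
$$
(a\vee b)(c\vee e)\ \le\ (a\vee c)(b\vee e)\ \vee\ (a\vee e)(b\vee c),
$$
which now absorbs the entire factor of $9$ and frees the remaining step of any constant.

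Finally I would prove this scalar inequality by expanding every product via the distributive rule (\ref{max}). The left-hand side becomes $ac\vee ae\vee bc\vee be$, and the right-hand side becomes the maximum over the six terms $ab,\,ac,\,ae,\,bc,\,be,\,ce$. Since each of the four products $ac,\,ae,\,bc,\,be$ appearing on the left also appears among these six, the maximum on the left is dominated by the maximum on the right, and the inequality follows at once. Combining this with the two displayed bounds above establishes (\ref{5.4}).
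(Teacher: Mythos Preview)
Your proof is correct and follows essentially the same approach as the paper's: both establish the two-sided comparison $d(x,p)\vee d(y,p)\le \mu_p(x,y)\le 3\big(d(x,p)\vee d(y,p)\big)$ (the paper's (\ref{1.2}) and (\ref{1.31})) and then reduce to the same scalar max-inequality via the distributive rule (\ref{max}). The only cosmetic difference is that the paper phrases the lower bound using $d(x,p)\wedge d(y,p)$ rather than a without-loss-of-generality assumption, and it writes out the expanded max on the right-hand side slightly differently; the substance is identical.
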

\begin{proof}
Since $d(x,y)\leq d(x,p)+d(y,p)\leq 2(d(x,p)\vee d(y,p))$ and since
$$
\sqrt{d(x,p)d(y,p)}\leq \frac {d(x,p)+d(y,p)}{2}\leq d(x,p)\vee d(y,p),  
$$
we have
\begin{equation}\label{1.2}
\mu_p(x,y)\leq \frac {3}{2}\big[d(x,p)+d(y,p)\big]\leq 3\big[d(x,p)\vee d(y,p)\big]
\end{equation}
for all $x,y\in X$. Also, since $d(x,y)\geq d(x,p)\vee d(y,p)-d(x,p)\wedge d(y,p)$ and since
$\sqrt{d(x,p)d(y,p)}\geq d(x,p)\wedge d(y,p)$,
we have 
\begin{equation}\label{1.31}
\mu_p(x,y)\geq d(x,p)\vee d(y,p)\geq \frac{1}{2}\big[d(x,p)+d(y,p)\big]
\end{equation}
for all $x,y\in X$.
Using (\ref{max}), (\ref{1.2}) and (\ref{1.31})  we have
\begin{equation*}
\begin{split}
\frac {1}{9}&\mu_p(x,y)\mu_p(z,w)\leq \big[d(x,p)\vee d(y,p)\big]\big[d(z,p)\vee d(w,p)\big]\\
&=d(x,p)d(z,p)\vee d(x,p)d(w,p)\vee d(y,p)d(z,p)\vee d(y,p)d(w,p)\\
&\leq \Big[d(x,p)d(y,p)\vee d(x,p)d(w,p)\vee d(z,p)d(y,p)\vee d(z,p)d(w,p)\Big]\vee \\
&\vee \Big[d(x,p)d(y,p)\vee d(x,p)d(z,p)\vee d(w,p)d(y,p)\vee d(w,p)d(z,p)\Big]\\
&=\Big[\big(d(x,p)\vee d(z,p)\big)\big(d(y,p)\vee d(w,p)\big)\Big]\vee\Big[\big(d(x,p)\vee d(w,p)\big)\big(d(y,p)\vee d(z,p)\big)\Big]\\
&\leq\mu_p(x,z)\mu_p(y,w)\vee \mu_p(x,w)\mu_p(y,z),
\end{split}
\end{equation*}
as required.
\end{proof}

Note that 
\begin{equation}\label{1.5}
\mu_p(x,z)+\mu_q(y,z)\geq d(x,z)+d(y,z)\geq d(x,y)
\end{equation}
for all $x,y,z,q\in X$. In particular, for all $x,y,z,q\in X$, we have
\begin{equation}\label{1.6}
\mu_p(x,z)\vee\mu_q(y,z)\geq \frac {1}{2}d(x,y).
\end{equation}

\begin{lemma}\label{lem1}
Let $x,y,z\in X$ be arbitrary points. If
$$
\mu_p(x,z)\vee\mu_p(y,z)\geq K\big[\mu_p(x,z)\wedge\mu_p(y,z)\big]
$$
for some $K>3$, then
$$
\mu_p(x,z)+\mu_p(y,z)\leq\frac {3(K+3)}{2(K-3)}d(x,y).
$$
\end{lemma}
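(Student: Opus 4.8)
The plan is to reduce the claim to a single lower bound on $d(x,y)$ coming from the triangle inequality, after first normalizing the hypothesis. Since both the hypothesis and the desired conclusion are symmetric in $x$ and $y$, I would assume without loss of generality that $\mu_p(x,z)\geq \mu_p(y,z)$, so that the hypothesis reads $\mu_p(x,z)\geq K\mu_p(y,z)$. Writing $a=d(x,p)$, $b=d(y,p)$, $c=d(z,p)$, $M=\mu_p(x,z)$ and $m=\mu_p(y,z)$, the target inequality becomes $M+m\leq \tfrac{3(K+3)}{2(K-3)}\,d(x,y)$. Because $M\geq Km$ gives $m\leq M/K$ and hence $M+m\leq \tfrac{K+1}{K}M$, it suffices to produce a lower bound for $d(x,y)$ that is a suitable multiple of $M$.

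The two ingredients I would extract are upper bounds on $b$ and $c$ and a lower bound on $a$, all in terms of $M$. For the upper bounds I apply the lower estimate (\ref{1.31}) to the pair $(y,z)$, which gives $m\geq d(y,p)\vee d(z,p)$, hence $b\leq m$ and $c\leq m$; combined with $m\leq M/K$ this yields $b\leq M/K$ and $c\leq M/K$. For the lower bound on $a$ I apply the upper estimate (\ref{1.2}) to the pair $(x,z)$, which gives $M\leq \tfrac{3}{2}(a+c)$, so that $a\geq \tfrac{2M}{3}-c\geq \tfrac{2M}{3}-\tfrac{M}{K}$.

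With these in hand the finish is immediate. The triangle inequality gives $d(x,y)\geq d(x,p)-d(y,p)=a-b$, and substituting the bounds above yields
\[
d(x,y)\ \geq\ \Big(\tfrac{2M}{3}-\tfrac{M}{K}\Big)-\tfrac{M}{K}\ =\ M\,\frac{2(K-3)}{3K}.
\]
Rearranging gives $M\leq \tfrac{3K}{2(K-3)}\,d(x,y)$, and therefore $M+m\leq \tfrac{K+1}{K}M\leq \tfrac{3(K+1)}{2(K-3)}\,d(x,y)$. Since $K+1\leq K+3$, this is in fact stronger than the asserted bound $\tfrac{3(K+3)}{2(K-3)}\,d(x,y)$, so the lemma follows.

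I do not expect a serious obstacle here: the argument is essentially a packaging of the two-sided comparison between $\mu_p$ and the distances $d(\cdot,p)$ that was recorded in (\ref{1.2}) and (\ref{1.31}). The one point that requires care is the hypothesis $K>3$, which is exactly what makes the coefficient $\tfrac{2(K-3)}{3K}$ positive, so that the bound $d(x,y)\geq a-b$ is genuinely useful and the division producing $\tfrac{3K}{2(K-3)}$ is legitimate. The mild looseness (the appearance of $K+3$ rather than the $K+1$ one actually obtains) merely reflects that the comparison constants $3$ and $\tfrac{1}{2}$ in (\ref{1.2}) and (\ref{1.31}) are not sharp, and any convenient rounding at the last step recovers the stated constant.
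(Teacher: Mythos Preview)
Your proof is correct and follows essentially the same strategy as the paper's: both arguments combine the two-sided estimate (\ref{1.2})--(\ref{1.31}) with the hypothesis $M\geq Km$ and finish via the reverse triangle inequality $d(x,y)\geq d(x,p)-d(y,p)$. The only cosmetic difference is that the paper uses the sum form of (\ref{1.31}) to eliminate $d(z,p)$ from $d(x,p)+d(y,p)+2d(z,p)$, whereas you use the max form to bound $b$ and $c$ separately by $M/K$; your bookkeeping happens to yield the slightly sharper constant $\tfrac{3(K+1)}{2(K-3)}$, which you then relax to match the stated $\tfrac{3(K+3)}{2(K-3)}$.
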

\begin{proof}
Without loss of generality we can assume that $\mu_p(x,z)\geq\mu_p(y,z)$. Using (\ref{1.31}) we obtain
$$
\frac {K}{2}\big(d(y,p)+d(z,p)\big)\leq K\mu_{p}(y,z)\leq \mu_{p}(x,z)\leq \frac {3}{2}\big(d(x,p)+d(z,p)\big),
$$
which implies $Kd(y,p)+(K-3)d(z,p)\leq 3d(x,p)$.
In particular,
$$
2d(z,p)\leq\frac {6}{K-3}d(x,p)-\frac {2K}{K-3}d(y,p).
$$
The latter along with (\ref{1.2}) implies
\begin{equation*}\label{L3_3}
\begin{split}
\mu_{p}(x,z)+\mu_{p}(y,z)&\leq\frac {3}{2} \big(d(x,p)+d(y,p)+2d(z,p)\big)\\
&\leq \frac {3}{2}\big(d(x,p)+d(y,p)+\frac {6}{K-3}d(x,p)-\frac {2K}{K-3}d(y,p)\big)\\
&=\frac {3(K+3)}{2(K-3)}\big(d(x,p)-d(y,p)\big)\leq \frac {3(K+3)}{2(K-3)}d(x,y),
\end{split}
\end{equation*}
completing the proof.
\end{proof}

Suppose now that $p_1,p_2,\dots,p_k$ are arbitrary points in $X$ and set $P=\{p_1,p_2,\dots,p_k\}$.

\begin{lemma}\label{L3}
For all $x,y,z\in X$ we have
\begin{equation}\label{L3_01}
\prod_{i=1}^{k}\Big(\mu_{p_i}(x,z)+\mu_{p_i}(y,z)\Big)\leq 9^k\Bigg(\prod_{i=1}^{k}\mu_{p_i}(x,z)+\prod_{i=1}^{k}\mu_{p_i}(y,z)\Bigg).
\end{equation}
\end{lemma}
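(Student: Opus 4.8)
The plan is to reduce the claim to a statement about sums of ordinary distances and then to use the triangle inequality to control the ratios $\mu_{p_i}(x,z)/\mu_{p_i}(y,z)$. Write $a_i=\mu_{p_i}(x,z)$, $b_i=\mu_{p_i}(y,z)$ and set $m=d(x,y)$, $P=d(x,z)$, $Q=d(y,z)$; since \eqref{L3_01} is symmetric in $x$ and $y$ (the swap only interchanges the two products on the right), I may assume $P\ge Q$. It is worth stressing at the outset that \eqref{L3_01} is genuinely \emph{false} for arbitrary positive reals $a_i,b_i$ (take $a_1=b_2=R$, $a_2=b_1=1/R$ and let $R\to\infty$, so that the quotient behaves like $R^2/2$). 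Hence every bit of the argument must come from the metric structure, the basic input being $a_i+b_i\ge m$ for all $i$, which is \eqref{1.5}.

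First I would pass from the $\mu_{p_i}$ to the elementary quantities $u_i=d(x,p_i)+d(z,p_i)$ and $v_i=d(y,p_i)+d(z,p_i)$. Inequalities \eqref{1.2} and \eqref{1.31} give $\tfrac12 u_i\le a_i\le\tfrac32 u_i$ and $\tfrac12 v_i\le b_i\le\tfrac32 v_i$, whence
\[
\prod_{i=1}^k(a_i+b_i)\le\Big(\tfrac32\Big)^k\prod_{i=1}^k(u_i+v_i),\qquad \prod_{i=1}^k u_i+\prod_{i=1}^k v_i\le 2^k\Big(\prod_{i=1}^k a_i+\prod_{i=1}^k b_i\Big).
\]
Because $\tfrac32\cdot 3\cdot 2=9$, the whole lemma follows once I establish the reduced inequality $\prod_{i=1}^k(u_i+v_i)\le 3^k\big(\prod_{i=1}^k u_i+\prod_{i=1}^k v_i\big)$.

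The heart of the matter is this reduced inequality, and here the triangle inequality does the work. Writing $\rho_i=u_i/v_i$, I would record $|u_i-v_i|=|d(x,p_i)-d(y,p_i)|\le m$ together with $u_i\ge P$, $v_i\ge Q$ and $P=\max(P,Q)\ge m/2$ (the last from $m\le P+Q\le 2P$); these yield the \emph{one--sided} bound $\rho_i\ge u_i/(u_i+m)\ge P/(P+m)\ge 1/3$ for every $i$. The decisive point is that although $\rho_i$ can be large (when $Q$ is small), the very degeneracy that makes $\rho_i$ large forces $P$ to be comparable to $m$ and so keeps the lower end of the range away from $0$: the triangle inequality forbids one basepoint from making $\mu(x,z)\gg\mu(y,z)$ while another makes the reverse. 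I would then rewrite the reduced inequality as $\prod_i(1+\rho_i)\le 3^k\big(1+\prod_i\rho_i\big)$ and finish by splitting the indices according to the size of $\rho_i$: the balanced factors are bounded directly, while on the extreme indices $u_i+v_i$ is comparable to $m$ — this is the analogue of Lemma~\ref{lem1}, which with the choice $K=9$ produces exactly the constant $\tfrac{3(K+3)}{2(K-3)}=3$ — so that the mixed terms of the expanded product never overwhelm the two pure products $\prod u_i$ and $\prod v_i$.

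I expect the main obstacle to be precisely this final product estimate: controlling the $2^k$ mixed terms of $\prod_i(1+\rho_i)$ \emph{uniformly in} $k$, given that the individual ratios $\rho_i$ admit no universal upper bound. The resolution is that the only facts used — the bound $\rho_i\ge 1/3$ and the comparison $a_i+b_i\asymp m$ on the extreme indices — are themselves independent of $k$, which is exactly the feature responsible for the Gromov-hyperbolicity constant being independent of the number of metrics averaged.
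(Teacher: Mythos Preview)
Your reduction to the quantities $u_i=d(x,p_i)+d(z,p_i)$ and $v_i=d(y,p_i)+d(z,p_i)$ is clean, and the one--sided bound $\rho_i=u_i/v_i\ge 1/3$ is both correct and attractive. But the place you flag as the ``main obstacle'' is a genuine gap, not just a matter of writing out details. The inequality $\prod_i(1+\rho_i)\le 3^k\bigl(1+\prod_i\rho_i\bigr)$ is \emph{false} under the sole hypothesis $\rho_i\ge 1/3$: for $k=5$ take $\rho_1=\cdots=\rho_4=1/3$ and $\rho_5=R$; then the left side is $(256/81)(1+R)$ while the right side is $243+3R$, and the inequality fails once $R>1495$. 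You observe correctly that the metric constraints forbid this particular configuration (equality $\rho_i=1/3$ forces $P=Q=m/2$, which caps every $\rho_j$ at $3$), but you never convert that coupling into an argument. Your ``split the indices'' sketch does not say what to do when both balanced and extreme indices are present, and that mixed case is exactly where the $2^k$ cross--terms in the expansion of $\prod(u_i+v_i)$ live.

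There is also a quantitative problem with the detour through $(u_i,v_i)$. If one runs the natural two--case induction in this setting (peel off a balanced index with $u_i\vee v_i\le C(u_i\wedge v_i)$, otherwise all indices are $C$--extreme and one uses $|u_i-v_i|\le m$ together with the cross--bound $u_i+v_j\ge m$), the balanced step costs a factor $C+1$ while the all--extreme step costs $2(C+1)/(C-1)$; these are simultaneously $\le 3$ for no value of $C$ (the first needs $C\le 2$, the second $C\ge 5$). The best uniform choice is $C=3$, giving $4^k$ for the reduced inequality and hence $12^k$, not $9^k$, in the original. So the passage to $(u_i,v_i)$, while it yields the pleasant lower bound on $\rho_i$, actually loses constants.

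The paper avoids this by working directly with $a_i=\mu_{p_i}(x,z)$, $b_i=\mu_{p_i}(y,z)$ and arguing by induction on $k$. If some index is balanced, say $a_1\vee b_1\le 6(a_1\wedge b_1)$, then $a_1+b_1\le 7(a_1\wedge b_1)$ and one reduces to $k-1$ at cost $7<9$. If \emph{all} indices are $6$--extreme, one takes the global minimum (say $a_1$) and uses the cross--inequality $a_1\vee b_j\ge m/2$ from \eqref{1.6} to force $b_j\ge m/2$ for every $j$; Lemma~\ref{lem1} with $K=6$ then gives $a_j+b_j\le \tfrac{9}{2}m$, so $\prod(a_j+b_j)\le(9/2)^k m^k\le 9^k\prod b_j$. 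The essential idea your sketch is missing is this inductive peeling: balanced indices are removed one at a time, and only in the residual all--extreme situation does the global cross--constraint give a direct bound.
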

\begin{proof}
Let $x,y,z\in X$ be arbitrary points. For simplicity, we set
$$
a_i=\mu_{p_i}(x,z)\quad\text{and}\quad b_i=\mu_{p_i}(y,z),\ \ i=1,2,\dots,k.
$$ 
By (\ref{1.6}) we then have
\begin{equation}\label{L3_1.6}
a_i\vee b_j\geq\frac {1}{2}d(x,y)\qquad\text{for\ all}\quad i,j=1,2,\dots,k.
\end{equation}
We will prove the lemma by induction. So assume first that $k=2$.
Hence we need to show that
\begin{equation}\label{L3_1}
(a_1+b_1)(a_2+b_2)\leq 81(a_1a_2+b_1b_2).
\end{equation}

{\it{Case 1:}} $a_1\vee b_1\leq 6(a_1\wedge b_1)$ or $a_2\vee b_2\leq 6(a_2\wedge b_2)$. Without loss of generality we can assume that $a_1\vee b_1\leq 6(a_1\wedge b_1)$. Then 
$$
a_1+b_1=a_1\vee b_1+a_1\wedge b_1\leq 7(a_1\wedge b_1)\quad\text{and}\quad (a_1\wedge b_1)(a_2+b_2)\leq a_1a_2+b_1b_2.
$$
Hence
$$
(a_1+b_1)(a_2+b_2)\leq 7(a_1\wedge b_1)(a_2+b_2)\leq 7(a_1a_2+b_1b_2)
$$
so that (\ref{L3_1}) holds in this case.  

{\it{Case 2:}} $a_1\vee b_1\geq 6(a_1\wedge b_1)$ and $a_2\vee b_2\geq 6(a_2\wedge b_2)$. 
Without loss of generality we can assume that $a_1=a_1\wedge b_1\wedge a_2\wedge b_2$. By (\ref{L3_1.6}) we then have
$$
b_1\geq\frac {1}{2}d(x,y)\quad\text{and}\quad b_2\geq\frac {1}{2}d(x,y).
$$
Hence
$$
a_1a_2+b_1b_2\geq b_1b_2\geq\frac {1}{4}\big[d(x,y)\big]^2.
$$
Also, by Lemma~\ref{lem1} we have
$$
a_1+b_1\leq\frac {9}{2}d(x,y)\quad\text{and}\quad a_2+b_2\leq\frac {9}{2}d(x,y)
$$
and hence
$$
(a_1+b_1)(a_2+b_2)\leq\frac {81}{4}\big[d(x,y)\big]^2.
$$
Consequently,
$$
(a_1+b_1)(a_2+b_2)\leq\frac {81}{4}\big[d(x,y)\big]^2\leq 81(a_1a_2+b_1b_2),
$$
completing the proof of the lemma for $k=2$.

Assume now that (\ref{L3_01}) holds for $k=m$. That is,
\begin{equation}\label{L3_9}
\prod_{i=1}^{m}(a_i+b_i)\leq 9^m\Big(\prod_{i=1}^{m}a_i+\prod_{i=1}^{m}b_i\Big).
\end{equation}
We need to show that it also holds for $k=m+1$. That is,
\begin{equation}\label{L3_10}
\prod_{i=1}^{m+1}(a_i+b_i)\leq {9}^{m+1}\Big(\prod_{i=1}^{m+1}a_i+\prod_{i=1}^{m+1}b_i\Big).
\end{equation}

{\it{Case 1:}} $a_i\vee b_i\leq 6 (a_i\wedge b_i)$ for some $i\in\{1,2,\dots,m+1\}$. Note that
$$
a_i+b_i=(a_i\vee b_i)+(a_i\wedge b_i)\leq 7(a_i\wedge b_i).
$$
Without loss of generality we can assume that $i=1$. Then
$$
\prod_{i=1}^{m+1}a_i+\prod_{i=1}^{m+1}b_i\geq (a_1\wedge b_1)\Big(\prod_{i=2}^{m+1}a_i+\prod_{i=2}^{m+1}b_i\Big)
$$
and hence
\begin{equation*}
\begin{split}
\prod_{i=1}^{m+1}(a_i+b_i)&=(a_1+b_1)\prod_{i=2}^{m+1}(a_i+b_i)\leq (a_1+b_1){9}^m\Big(\prod_{i=2}^{m+1}a_i+\prod_{i=2}^{m+1}b_i\Big)\\
&\leq7(a_1\wedge b_1){9}^m\Big(\prod_{i=2}^{m+1}a_i+\prod_{i=2}^{m+1}b_i\Big)< {9}^{m+1}\Big(\prod_{i=1}^{m+1}a_i+\prod_{i=1}^{m+1}b_i\Big),
\end{split}
\end{equation*}
as required.

{\it{Case 2:}} $a_i\vee b_i\geq 6 (a_i\wedge b_i)$ for all $i\in\{1,2,\dots,m+1\}$.
Without loss of generality we can assume that $a_1$ is the smallest of the numbers $a_i$ and $b_i$ for all $i=1,2,\dots,{m+1}$. By (\ref{L3_1.6}) we then have
$$
b_i\geq\frac {1}{2}d(x,y)\quad\text{for\ all}\ \ i=1,2,\dots,{m+1}.
$$
Hence
$$
\prod_{i=1}^{m+1}a_i+\prod_{i=1}^{m+1}b_i\geq\prod_{i=1}^{m+1}b_i \geq\frac {1}{2^{m+1}}\big[d(x,y)\big]^{m+1}.
$$
Also, by Lemma~\ref{lem1} we have $a_i+b_i\leq (9/2)d(x,y)$ for each $i$. Hence
$$
\prod_{i=1}^{m+1}(a_i+b_i)\leq\Big(\frac {9}{2}\Big)^{m+1}\big[d(x,y)\big]^{m+1}.
$$
Consequently,
$$
\prod_{i=1}^{m+1}(a_i+b_i)\leq\Big(\frac {9}{2}\Big)^{m+1}\big[d(x,y)\big]^{m+1}\leq {9}^{m+1}\Big(\prod_{i=1}^{m+1}a_i+\prod_{i=1}^{m+1}b_i\Big)
$$
completing the proof of the lemma.
\end{proof}

We need the following lemma. For $K=1$, this lemma was proved in \cite{Ibr4} (see \cite[Lemma 3.7]{Ibr4}).

\begin{lemma}\label{lem4}
Let $r_{ij}\geq 0$ be real numbers such that 
$r_{ij}=r_{ji}$ and $r_{ij}\leq K(r_{ik}+r_{jk})$ for some $K\geq 1$ and for all $i,j,k\in\{1,2,3,4\}$. Then 
$\sqrt{r_{12}r_{34}}\leq K(\sqrt{r_{13}r_{24}}+\sqrt{r_{14}r_{23}})$.
In particular, 
$$
r_{12}r_{34}\leq 2K^2(r_{13}r_{24}+r_{14}r_{23})\leq (2K)^2\max\{r_{13}r_{24},\, r_{14}r_{23}\}.
$$
\end{lemma}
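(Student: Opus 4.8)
The plan is to reduce the square-root (Ptolemy-type) inequality to a single elementary inequality among four nonnegative reals, by first discarding $r_{12}$ and $r_{34}$ in favour of their relaxed-triangle upper bounds. Writing $a=r_{13}$, $b=r_{24}$, $c=r_{14}$, $d=r_{23}$, the hypotheses give $r_{12}\le K(a+d)$, $r_{12}\le K(b+c)$ and $r_{34}\le K(a+c)$, $r_{34}\le K(b+d)$. Multiplying one bound for $r_{12}$ by one bound for $r_{34}$ in all four ways yields
\[
r_{12}r_{34}\le K^2\min\{(a+d)(a+c),\,(b+c)(b+d),\,(b+c)(a+c),\,(a+d)(b+d)\}.
\]
Thus it suffices to prove the purely algebraic claim that this minimum is at most $S:=\big(\sqrt{ab}+\sqrt{cd}\big)^2=ab+cd+2\sqrt{abcd}$; taking square roots then gives $\sqrt{r_{12}r_{34}}\le K(\sqrt{ab}+\sqrt{cd})$, which is exactly the assertion. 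The ``in particular'' clause is then routine: squaring and using $(x+y)^2\le 2(x^2+y^2)$ gives $r_{12}r_{34}\le 2K^2(r_{13}r_{24}+r_{14}r_{23})$, and bounding a sum by twice its maximum gives the $(2K)^2$ estimate. Observe that $K$ enters only through the harmless factor $K^2$, so the argument is uniform in $K\ge 1$ and requires no case split on $K$.

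The heart of the matter is the inequality $\min\{P^{(a)},P^{(b)},P^{(c)},P^{(d)}\}\le S$, where I abbreviate $P^{(a)}=(a+c)(a+d)$, $P^{(b)}=(b+c)(b+d)$, $P^{(c)}=(a+c)(b+c)$, $P^{(d)}=(a+d)(b+d)$. I would argue by contradiction, assuming all four products strictly exceed $S$. A direct expansion produces the four ``excess'' expressions
\[
P^{(a)}-S=a(a-b+c+d)-2\sqrt{abcd},\qquad P^{(b)}-S=b(b-a+c+d)-2\sqrt{abcd},
\]
together with the analogous $P^{(c)}-S=c(a+b+c-d)-2\sqrt{abcd}$ and $P^{(d)}-S=d(a+b-c+d)-2\sqrt{abcd}$, so that each assumed inequality $P^{(e)}>S$ forces the corresponding bracket to be positive.

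The decisive step is to multiply the two \emph{opposite} excess inequalities against each other. From $a(a-b+c+d)>2\sqrt{abcd}$ and $b(b-a+c+d)>2\sqrt{abcd}$, with all factors positive, cancelling $ab$ gives $(c+d)^2-(a-b)^2>4cd$, that is $(c-d)^2>(a-b)^2$. Applying the identical manipulation to $P^{(c)}>S$ and $P^{(d)}>S$ gives $(a+b)^2-(c-d)^2>4ab$, that is $(a-b)^2>(c-d)^2$. These two conclusions are incompatible, so at least one $P^{(e)}$ must fail to exceed $S$, proving the minimum is at most $S$. The borderline cases in which one of $a,b,c,d$ vanishes are disposed of separately and trivially; for instance if $c=0$ then $P^{(c)}=ab=S$ outright.

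I expect the main obstacle to be exactly this cancellation. The four excess terms look asymmetric and do not obviously clash, and it is only upon pairing $P^{(a)}P^{(b)}$ against $P^{(c)}P^{(d)}$ and recognising the common factor $(a+c)(a+d)(b+c)(b+d)$ that the difference-of-squares identity collapses everything to the clean comparison of $|a-b|$ with $|c-d|$. Spotting which products to pair, and verifying that all intermediate factors are nonnegative so that the multiplication of inequalities is legitimate, is the only delicate point; once the pairing is found, the remainder is bookkeeping.
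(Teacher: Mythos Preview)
Your proof is correct and follows a genuinely different route from the paper's. The paper normalises by assuming $r_{13}=\min\{r_{13},r_{14},r_{23},r_{24}\}$ and $r_{23}\ge r_{14}$, and then splits into the cases $r_{14}+r_{24}\le r_{13}+r_{23}$ and $r_{14}+r_{24}\ge r_{13}+r_{23}$; in the second case it introduces the auxiliary function $f(x)=r_{13}r_{24}+2\sqrt{r_{13}r_{24}r_{14}}\,\sqrt{x}-r_{13}^{\,2}-r_{13}r_{14}-r_{13}x$ and verifies that $f$ is increasing on $[r_{14},\,r_{14}+r_{24}-r_{13}]$ with $f(r_{14})\ge 0$. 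Your argument is fully symmetric: after bounding $r_{12}r_{34}$ by $K^2\min\{P^{(a)},P^{(b)},P^{(c)},P^{(d)}\}$, multiplying the paired excess inequalities collapses everything to the incompatible strict inequalities $(c-d)^2>(a-b)^2$ and $(a-b)^2>(c-d)^2$, with no ordering hypothesis and no monotonicity computation. This is cleaner and isolates the purely algebraic core (the bound $\min_e P^{(e)}\le S$, valid for arbitrary nonnegative $a,b,c,d$) from the relaxed-triangle hypothesis; the paper's approach is more hands-on but perhaps easier to discover by trial once an ordering is fixed. Both routes dispatch the ``in particular'' clause identically via $(x+y)^2\le 2(x^2+y^2)$.
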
 \begin{proof} We can assume, without loss of
generality, that $r_{13}$ is the smallest of the numbers $r_{13}, r_{14}, r_{24}, r_{23}$, and that $r_{23}\ge r_{14}$. Clearly, it
suffices to show that
$$
r_{12}r_{34}\leq K^2(r_{13}r_{24}+r_{14}r_{23}+2\sqrt{r_{13}r_{24}r_{14}r_{23}}).
$$
Equivalently, we need to show that $\alpha\geq 0$, where
$$
\alpha=-r_{12}r_{34}+K^2(r_{13}r_{24}+r_{14}r_{23}+2\sqrt{r_{13}r_{24}r_{14}r_{23}}).
$$
By the assumptions we have
$r_{12}\leq K\min\{r_{13}+r_{23},\, r_{14}+r_{24}\}$ and $r_{34}\leq K\min\{r_{13}+r_{14},\, r_{23}+r_{24}\}$.
If $r_{14}+r_{24}\leq r_{13}+r_{23}$, then $r_{23}\geq r_{14}+r_{24}-r_{13}$. Since $r_{24}\geq r_{13}$, we obtain
\begin{equation*}
\begin{split}
\alpha \geq -K^2(r_{14}+r_{24})(r_{13}+r_{14}) &+K^2\big(r_{13}r_{24}+r_{14}(r_{14}+r_{24}-r_{13})+\\
&+2\sqrt{r_{13}r_{24}r_{14}(r_{14}+r_{24}-r_{13})}\big)\\ 
&=2K^2(\sqrt{r_{13}r_{24}r_{14}(r_{14}+r_{24}-r_{13})}-r_{13}r_{14})\geq 0.
\end{split}
\end{equation*}

Now suppose that $r_{14}+r_{24}\geq r_{13}+r_{23}$.
Then $r_{23}\leq r_{14}+r_{24}-r_{13}$ and hence
$$
\alpha\geq -K^2(r_{13}+r_{23})(r_{13}+r_{14})+K^2\big(r_{13}r_{24}+r_{14}r_{23}+2\sqrt{r_{13}r_{24}r_{14}r_{23}}\big)=K^2f(r_{23}), 
$$
where
$$
f(x)=r_{13}r_{24}+2\sqrt{r_{13}r_{24}r_{14}}\sqrt{x}-(r_{13})^2-r_{13}r_{14}-r_{13}x.
$$
The function $f(x)$
is increasing on the interval $[r_{14}, r_{14}+r_{24}-r_{13}]$.
Indeed, for each $x\in [r_{14}, r_{14}+r_{24}-r_{13}]$ we have
$r_{13}x-r_{24}r_{14}\leq
r_{13}(r_{14}+r_{24}-r_{13})-r_{24}r_{14}=(r_{14}-r_{13})(r_{13}-r_{24})\leq 0$ and hence $r_{13}\sqrt{x}-\sqrt{r_{13}r_{24}r_{14}}\leq 0$.
The latter is equivalent to $f^\prime(x)\geq 0$. Since $f(r_{14})=r_{13}r_{24}+2r_{14}\sqrt{r_{13}r_{24}}-(r_{13})^2-2r_{13}r_{14}
=r_{13}(r_{24}-r_{13})+2r_{14}(\sqrt{r_{13}r_{24}}-r_{13})\geq 0$, we obtain $\alpha\geq K^2f(r_{23})\geq K^2f(r_{14})\geq 0$,
completing the proof of the first part. Since $(a+b)^2\leq 2(a^2+b^2)$ for all real numbers $a$ and $b$, the second part follows.
\end{proof}

Next, we define a distance function $\mu_P\colon X\times X\to [0,+\infty)$ by
\begin{equation}\label{2.1}
\mu_P(x,y)=\prod_{i=1}^{k}\mu_{p_i}(x,y)=\prod_{i=1}^{k}\big[d(x,y)+\sqrt{d(x,p_i)d(y,p_i)}\big].
\end{equation}

\begin{lemma}\label{lem3}
For all $x,y,z\in X$ we have
$$
\mu_P(x,y)\leq \Big(\frac {27}{2}\Big)^k\Big(\mu_P(x,z)+\mu_P(z,y)\Big).
$$
Moreover,
$$
\mu_P(x,y)\mu_P(z,w)\leq 4\Big(\frac {27}{2}\Big)^{2k}\max\Big\{\mu_P(x,z)\mu_P(y,w),\,\, \mu_P(x,w)\mu_P(y,z)\Big\}.
$$
\end{lemma}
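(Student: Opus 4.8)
The plan is to read the two displayed inequalities as the statement that $\mu_P$ is a ``quasimetric'': the first is a quasi-triangle inequality with multiplicative constant $(27/2)^k$, and the second is a Ptolemy-type product inequality which I will deduce from the first by invoking Lemma~\ref{lem4}. Thus the real content is the first inequality, after which the second is essentially a black-box application.

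For the first inequality I would estimate each factor $\mu_{p_i}(x,y)$ using the two elementary bounds already recorded in the proof of Lemma~\ref{lem0}. The upper bound (\ref{1.2}) gives $\mu_{p_i}(x,y)\le\frac32\big(d(x,p_i)+d(y,p_i)\big)$, while the lower bound (\ref{1.31}) gives $d(x,p_i)\le d(x,p_i)\vee d(z,p_i)\le\mu_{p_i}(x,z)$ and similarly $d(y,p_i)\le\mu_{p_i}(z,y)$. Combining these factor by factor yields $\mu_{p_i}(x,y)\le\frac32\big(\mu_{p_i}(x,z)+\mu_{p_i}(z,y)\big)$, so that, writing $a_i=\mu_{p_i}(x,z)$ and $b_i=\mu_{p_i}(z,y)$,
\[
\mu_P(x,y)=\prod_{i=1}^k\mu_{p_i}(x,y)\le\Big(\tfrac32\Big)^k\prod_{i=1}^k(a_i+b_i).
\]
At this point Lemma~\ref{L3} converts the product of sums into a sum of products, namely $\prod_i(a_i+b_i)\le 9^k\big(\prod_i a_i+\prod_i b_i\big)=9^k\big(\mu_P(x,z)+\mu_P(z,y)\big)$; multiplying the constants $(3/2)^k$ and $9^k$ produces exactly $(27/2)^k$, as required.

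For the second inequality I would apply Lemma~\ref{lem4} to the four points under the labelling $1\mapsto x$, $2\mapsto y$, $3\mapsto z$, $4\mapsto w$ with $r_{ij}=\mu_P(x_i,x_j)$. These numbers are symmetric because each $\mu_{p_i}$ is symmetric, and the first inequality is precisely the hypothesis $r_{ij}\le K(r_{ik}+r_{jk})$ of Lemma~\ref{lem4} with $K=(27/2)^k$. The conclusion $r_{12}r_{34}\le(2K)^2\max\{r_{13}r_{24},\,r_{14}r_{23}\}$ then reads $\mu_P(x,y)\mu_P(z,w)\le 4(27/2)^{2k}\max\{\mu_P(x,z)\mu_P(y,w),\,\mu_P(x,w)\mu_P(y,z)\}$, which is the claim.

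The step I expect to require the most care is the constant bookkeeping in the first inequality. The naive route---bounding $\mu_{p_i}(x,y)\le 3\big(\mu_{p_i}(x,z)+\mu_{p_i}(z,y)\big)$ directly (which is also valid, since $d(x,p_i)\vee d(y,p_i)\le a_i\vee b_i$) and then applying Lemma~\ref{L3}---only yields $27^k$. Halving the constant to $(27/2)^k$ hinges on first passing to the quantities $d(x,p_i)+d(y,p_i)$ through the sharp factor $3/2$ of (\ref{1.2}), and only afterwards reabsorbing them into $\mu_{p_i}(x,z)$ and $\mu_{p_i}(z,y)$ via the lower bound (\ref{1.31}). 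Everything else is a direct assembly of the already-established Lemmas~\ref{L3} and~\ref{lem4}.
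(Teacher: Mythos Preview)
Your proposal is correct and follows exactly the paper's approach: the paper likewise obtains $\mu_{p_i}(x,y)\le\frac32\big(\mu_{p_i}(x,z)+\mu_{p_i}(z,y)\big)$ factor by factor (implicitly via (\ref{1.2}) and (\ref{1.31}), just as you spell out), multiplies by the $9^k$ from Lemma~\ref{L3}, and then cites Lemma~\ref{lem4} with $K=(27/2)^k$ for the second inequality. Your write-up is in fact more explicit than the paper's about why the constant is $3/2$ rather than $3$.
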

\begin{proof}
Using (\ref{2.1}) and Lemma~\ref{L3} we have
\begin{equation*}
\begin{split}
\mu_P(x,y)&=\prod_{i=1}^{k}\mu_{p_i}(x,y)\leq\Big(\frac {3}{2}\Big)^k\,\prod_{i=1}^{k}\Big(\mu_{p_i}(x,z)+\mu_{p_i}(y,z)\Big)\\
&\leq\Big(\frac {3}{2}\Big)^k\,{9}^k\,\Big(\prod_{i=1}^{k}\mu_{p_i}(x,z)+\prod_{i=1}^{k}\mu_{p_i}(y,z)\Big)\\
&=\Big(\frac {27}{2}\Big)^k\,\Big(\mu_P(x,z)+\mu_P(y,z)\Big),
\end{split}
\end{equation*}
completing the proof of the first part. The second part follows from the first part and Lemma~\ref{lem4}.
\end{proof}


\section{Gromov hyperbolicity of the average of one-point, scale-invariant Cassinian metrics}\label{Gromov Hyperbolic}

We begin by showing that each one-point, scale-invariant Cassinian metric is Gromov hyperbolic. Recall that a metric space $(X,d)$ is Gromov hyperbolic if 
\begin{equation}\label{four-point hyperbolicity condition}
d(x,y)+d(z,v)\leq \big[d(x,z)+d(y,v)\big]\vee \big[d(x,v)+d(y,z)\big]+2\delta
\end{equation}
for all $v,x,y,z\in X$ and for some $\delta\geq 0$. The reader is referred to (\cite{Vai1}) for a detailed discussion Gromov hyperbolic metric spaces.
Recall that
$$
\tilde\tau_p(x,y)\leq\tau_p(x,y)\leq\tilde\tau_p(x,y)+\log 2
$$
for all $x,y\in X\setminus\{p\}$ (see, (\ref{aaaEq2.2})). It follows that if the metric $\tilde\tau_p$ satisfies (\ref{four-point hyperbolicity condition}) with a constant $\delta$, then the metric $\tau_p$ satisfies (\ref{four-point hyperbolicity condition}) with a constant $\delta+\log 2$.

\begin{lemma}\label{one-point}
Let $(X,d)$ be an arbitrary metric space and let $p\in X$ be any point. Then the space $(X\setminus\{p\},\,\tilde\tau_p)$ is Gromov hyperbolic with $\delta=\log 3$. In particular, the space $(X\setminus\{p\},\,\tau_p)$ is Gromov hyperbolic with $\delta=\log 3+\log 2$.
\end{lemma}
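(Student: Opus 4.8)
The plan is to verify the four-point hyperbolicity condition (\ref{four-point hyperbolicity condition}) for $\tilde\tau_p$ directly and to reduce it, after taking logarithms, to the multiplicative inequality already established in Lemma~\ref{lem0}. First I would rewrite every $\tilde\tau_p$-distance using the expression (\ref{aaEq2.2}), namely $\tilde\tau_p(x,y)=\log\frac{\mu_p(x,y)}{\sqrt{d(x,p)d(y,p)}}$, so that a sum of two $\tilde\tau_p$-distances becomes the logarithm of a product of two $\mu_p$-factors divided by a product of square-root terms.

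The key observation is that, for any four points $x,y,z,v\in X\setminus\{p\}$, each of the three pairings appearing in (\ref{four-point hyperbolicity condition}) takes the form
$$
\tilde\tau_p(\,\cdot\,,\cdot\,)+\tilde\tau_p(\,\cdot\,,\cdot\,)=\log\frac{\mu_p(\,\cdot\,,\cdot\,)\,\mu_p(\,\cdot\,,\cdot\,)}{\sqrt{d(x,p)d(y,p)d(z,p)d(v,p)}},
$$
where the subtracted denominator is \emph{the same} for all three pairings, since every one of the four points contributes exactly one factor $\sqrt{d(\,\cdot\,,p)}$. Hence this common term cancels, and because $\log$ is increasing the maximum of the two right-hand logarithms equals the logarithm of the maximum. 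Consequently, condition (\ref{four-point hyperbolicity condition}) for $\tilde\tau_p$ with $\delta=\log 3$ (so that $2\delta=\log 9$) is equivalent, after exponentiating, to
$$
\mu_p(x,y)\mu_p(z,v)\leq 9\big[\mu_p(x,z)\mu_p(y,v)\vee\mu_p(x,v)\mu_p(y,z)\big],
$$
which is precisely the statement of Lemma~\ref{lem0} with $w=v$. This establishes Gromov hyperbolicity of $\tilde\tau_p$ with $\delta=\log 3$.

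For $\tau_p$ I would simply invoke the comparison (\ref{aaaEq2.2}), namely $\tilde\tau_p\leq\tau_p\leq\tilde\tau_p+\log 2$, together with the remark preceding the statement: a distance function lying within additive distance $\log 2$ of a $\delta$-hyperbolic metric satisfies (\ref{four-point hyperbolicity condition}) with constant $\delta+\log 2$, yielding $\delta=\log 3+\log 2$ for $\tau_p$. I do not expect a genuine obstacle here, since all the combinatorial difficulty has already been absorbed into Lemma~\ref{lem0}; the only point requiring a moment of care is confirming that the square-root denominators cancel uniformly across all three pairings, which follows immediately from the fact that each of the four points contributes exactly one such factor.
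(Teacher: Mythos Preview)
Your proposal is correct and follows essentially the same argument as the paper: both reduce the four-point condition for $\tilde\tau_p$ to the multiplicative inequality of Lemma~\ref{lem0} by writing $\tilde\tau_p(x,y)=\log\bigl(\mu_p(x,y)/\sqrt{d(x,p)d(y,p)}\bigr)$, noting that the common denominator $\sqrt{d(x,p)d(y,p)d(z,p)d(v,p)}$ cancels across all three pairings, and then exponentiating. The passage to $\tau_p$ via (\ref{aaaEq2.2}) is likewise identical to the paper's treatment.
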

\begin{proof} It suffices to show that $\tilde\tau_p$ satisfies (\ref{four-point hyperbolicity condition}) with $\delta=\log 3$. Let $x,y,z,v\in X\setminus\{p\}$ be arbitrary points.
By Lemma~\ref{lem0} we have 
$$
\mu_p(x,y)\mu_p(z,v)\leq 9\big[\mu_p(x,z)\mu_p(y,v)\vee\mu_p(x,v)\mu_p(y,z)\big]
$$ 
or, equivalently,
\begin{equation*}
\begin{split}
&\frac {\mu_p(x,y)\mu_p(z,v)}{\sqrt{d(x,p)d(y,p)d(z,p)d(v,p)}}\\
&\leq 9\Bigg[\frac {\mu_p(x,z)\mu_p(y,v)}{\sqrt{d(x,p)d(y,p)d(z,p)d(v,p)}}\vee\frac {\mu_p(x,v)\mu_p(y,z)}{\sqrt{d(x,p)d(y,p)d(z,p)d(v,p)}}\Bigg].
\end{split}
\end{equation*}
The latter implies
\begin{equation}\label{bEq2.2}
\tilde\tau_p(x,y)+\tilde\tau_p(z,v)\leq\Big[\tilde\tau_p(x,z)+\tilde\tau_p(y,v)\Big]\vee \Big[\tilde\tau_p(x,v)+\tilde\tau_p(y,z)\Big]+2\log 3,
\end{equation}
completing the proof.
\end{proof}

We are now ready to present the main result of the paper. Let $(X,d)$ be any metric space and let $p_1,p_2,\dots,p_k$ be any points in $X$. Put $P=\{p_1,p_2,\dots,p_k\}$ and 
$D=X\setminus\{p_1,p_2,\dots,p_k\}$. We define a new metric $\hat\tau_D$ on $D$ by taking the simple average of the one-point, scale-invariant Cassinian metrics $\tau_{p_i}$, $i=1,2,\dots,k$. Namely, for $x,y\in D$ we define
\begin{equation}\label{3.1}
\hat\tau_D(x,y)=\frac {1}{k}\big[\tau_{p_1}(x,y)+\tau_{p_2}(x,y)+\cdots+\tau_{p_k}(x,y)\big]=\frac {1}{k}\sum_{i=1}^{k}\tau_{p_i}(x,y).
\end{equation}
It is clear that the average of any finitely many metrics is again a metric. We have
\begin{equation}\label{tilde and hat}
\tilde\tau_D(x,y)\leq\hat\tau_D(x,y)\leq\tilde\tau_D(x,y)+\log 2
\end{equation}
for all $x,y\in D$, where
\begin{equation}\label{tilde definition}
\tilde\tau_D(x,y)=\frac {1}{k}\sum_{i=1}^{k}\tilde\tau_{p_i}(x,y)=\frac {1}{k}\log\Bigg(\prod_{i=1}^{k}\frac {\mu_{p_i}(x,y)}{\sqrt{d(x,p_i)d(y,p_i)}}\Bigg).
\end{equation}

\begin{theorem}\label{T1}
The space $(D,\hat\tau_D)$ is Gromov hyperbolic with $\delta= 3\log 3+\log 2$. In particular, if $(X,d)$ is Ptolemaic, then the space $(D,\tilde\tau_D)$ is Gromov hyperbolic with $\delta= 3\log 3$.
\end{theorem}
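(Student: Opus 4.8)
The plan is to prove the four-point hyperbolicity condition (\ref{four-point hyperbolicity condition}) for $\tilde\tau_D$ with $\delta=3\log 3$ by a direct reduction to Lemma~\ref{lem3}, and then to transfer this estimate to $\hat\tau_D$ via the comparison (\ref{tilde and hat}). First I would rewrite $\tilde\tau_D$ in the compact form
\[
\tilde\tau_D(x,y)=\frac{1}{k}\log\frac{\mu_P(x,y)}{\sqrt{\prod_{i=1}^{k}d(x,p_i)\,\prod_{i=1}^{k}d(y,p_i)}},
\]
which is immediate from (\ref{tilde definition}) and (\ref{2.1}). The crucial observation is that in each of the three pairwise sums $\tilde\tau_D(x,y)+\tilde\tau_D(z,v)$, $\tilde\tau_D(x,z)+\tilde\tau_D(y,v)$, and $\tilde\tau_D(x,v)+\tilde\tau_D(y,z)$ appearing in (\ref{four-point hyperbolicity condition}), every one of the four points $x,y,z,v$ contributes its factor $\prod_i d(\cdot,p_i)$ exactly once, so the denominator $\sqrt{\prod_i d(x,p_i)d(y,p_i)d(z,p_i)d(v,p_i)}$ inside the logarithm is identical in all three. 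It therefore cancels, and the four-point condition for $\tilde\tau_D$ reduces to the purely multiplicative statement
\[
\tfrac{1}{k}\log\big(\mu_P(x,y)\mu_P(z,v)\big)\leq \tfrac{1}{k}\log\max\big\{\mu_P(x,z)\mu_P(y,v),\,\mu_P(x,v)\mu_P(y,z)\big\}+2\delta.
\]

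This is exactly where the second inequality of Lemma~\ref{lem3} applies (with the fourth variable taken to be $v$), giving $\mu_P(x,y)\mu_P(z,v)\leq 4(27/2)^{2k}\max\{\mu_P(x,z)\mu_P(y,v),\,\mu_P(x,v)\mu_P(y,z)\}$. Taking logarithms, dividing by $k$, and dividing by $2$, the required constant is
\[
\delta=\tfrac{1}{2k}\log\Big(4\big(\tfrac{27}{2}\big)^{2k}\Big)=\tfrac{\log 2}{k}+\big(3\log 3-\log 2\big)\leq 3\log 3\qquad(k\geq 1),
\]
the final inequality holding because $\tfrac{\log 2}{k}\leq \log 2$. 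This establishes the four-point condition for $\tilde\tau_D$ with $\delta=3\log 3$, and—crucially—the bound is independent of $k$. The mechanism behind this $k$-independence is precisely that the averaging factor $1/k$ cancels the exponent $2k$ coming from the $k$-fold product in Lemma~\ref{lem3}, so that only the constant $3\log 3-\log 2$ survives while the stray $\tfrac{\log 2}{k}$ term is largest at $k=1$ and is absorbed there.

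It then remains to assemble the two assertions. If $(X,d)$ is Ptolemaic, Theorem~\ref{3.1} shows each $\tilde\tau_{p_i}$ is a genuine metric, so the average $\tilde\tau_D$ is a metric; combined with the four-point estimate just proved, $(D,\tilde\tau_D)$ is Gromov hyperbolic with $\delta=3\log 3$. For the unconditional statement about $\hat\tau_D$, I would note that $\hat\tau_D$ is always a metric, being an average of the metrics $\tau_{p_i}$ from Theorem~\ref{GH}. To transfer the hyperbolicity I feed (\ref{tilde and hat}) into (\ref{four-point hyperbolicity condition}): the left-hand sum $\hat\tau_D(x,y)+\hat\tau_D(z,v)$ is at most $\tilde\tau_D(x,y)+\tilde\tau_D(z,v)+2\log 2$, while each pairwise sum on the right dominates its $\tilde\tau_D$-counterpart, so the maximum on the right for $\hat\tau_D$ dominates that for $\tilde\tau_D$. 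Applying the four-point condition already proved for $\tilde\tau_D$ then yields the condition for $\hat\tau_D$ with the extra $\log 2$, i.e. $\delta=3\log 3+\log 2$.

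The genuine difficulty of the argument is entirely contained in Lemma~\ref{lem3} (and behind it Lemma~\ref{L3} and Lemma~\ref{lem4}); granting that lemma, the present proof is essentially bookkeeping. The single point that must be handled with care is the constant: one has to verify that the $1/k$ in the average exactly neutralizes the $2k$ in the exponent so that the $3\log 3$ term persists for all $k$, rather than merely for some fixed $k$, since this $k$-uniformity is the whole content of the theorem.
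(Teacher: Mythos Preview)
Your proposal is correct and follows essentially the same route as the paper: both rewrite $\tilde\tau_D$ in terms of $\mu_P$, observe that the common denominator $\prod_i\sqrt{d(x,p_i)d(y,p_i)d(z,p_i)d(v,p_i)}$ cancels, apply the second inequality of Lemma~\ref{lem3}, and check that $\tfrac{1}{k}\log\big(4(27/2)^{2k}\big)\le 6\log 3$ before transferring to $\hat\tau_D$ via (\ref{tilde and hat}). Your write-up is simply more explicit about the transfer step and about invoking Theorem~\ref{3.1} for the Ptolemaic case, both of which the paper leaves implicit in its ``It suffices to show\ldots''.
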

\begin{proof}
It suffices to show that for all $x,y,z,w\in D$ we have
$$ 
\tilde\tau_D(x,y)+\tilde\tau_D(z,w)\leq \max\big\{\tilde\tau_D(x,z)+\tilde\tau_D(y,w),\, \tilde\tau_D(x,w)+\tilde\tau_D(y,z)\big\}+6\log 3.
$$
Using Lemma~\ref{lem3} we obtain
\begin{equation*}
\begin{split}
&\tilde\tau_D(x,y)+\tilde\tau_D(z,w)=\frac {1}{k}\log\Bigg(\prod_{i=1}^{k}\frac {\mu_{p_i}(x,y)\mu_{p_i}(z,w)}{\sqrt{d(x,p_i)d(y,p_i)d(z,p_i)d(w,p_i)}}\Bigg)\\
&=\frac {1}{k}\log\Bigg(\frac {\prod_{i=1}^{k}\mu_{p_i}(x,y)\prod_{i=1}^{k}\mu_{p_i}(z,w)}{\prod_{i=1}^{k}\sqrt{d(x,p_i)d(y,p_i)d(z,p_i)d(w,p_i)}}\Bigg)\\
&=\frac {1}{k}\log\Bigg(\frac {\mu_P(x,y)\mu_P(z,w)}{\prod_{i=1}^{k}\sqrt{d(x,p_i)d(y,p_i)d(z,p_i)d(w,p_i)}}\Bigg)\\
& \leq\frac {1}{k}\log\Bigg(\frac {4(27/2)^{2k}\max\big\{\mu_P(x,z)\mu_P(y,w),\,\, \mu_P(x,w)\mu_P(y,z)\big\}}{\prod_{i=1}^{k}\sqrt{d(x,p_i)d(y,p_i)d(z,p_i)d(w,p_i)}}\Bigg)\\
&=\frac {1}{k}\log\Bigg(\frac {\max\big\{\mu_P(x,z)\mu_P(y,w),\,\, \mu_P(x,w)\mu_P(y,z)\big\}}{\prod_{i=1}^{k}\sqrt{d(x,p_i)d(y,p_i)d(z,p_i)d(w,p_i)}}\Bigg)+2\log (27/2)+\frac {1}{k}\log 4\\
&=\max\big\{\tilde\tau_D(x,z)+\tilde\tau_D(y,w),\, \tilde\tau_D(x,w)+\tilde\tau_D(y,z)\big\}+2(\log (27/2)+\frac {1}{k}\log 2)\\
&\leq\max\big\{\tilde\tau_D(x,z)+\tilde\tau_D(y,w),\, \tilde\tau_D(x,w)+\tilde\tau_D(y,z)\big\}+6\log 3,
\end{split}
\end{equation*}
completing the proof.
\end{proof}

\begin{definition}
In the context of a general metric space $(X,d)$, the metric $\hat\tau_D$ will be referred to as the average scale-invariant Cassinian metric. 
\end{definition}

We end the paper with the following example that shows that the sum of two Gromov hyperbolic metrics is not, in general, Gromov hyperbolic. Consider the two-dimensional Euclidean space $\mathbb R^2$ equipped with the Euclidean metric $|-|$. For $x\in\mathbb R^2$ we write $x=(x_1,x_2)$. Define metrics $d_1$ and $d_2$ on $\mathbb R^2$ by
\begin{equation*}
d_1(x,y)=|x_1-y_1|+\tan^{-1}(|x_2-y_2|)\quad\text{and}\quad d_2(x,y)=|x_2-y_2|+\tan^{-1}(|x_1-y_1|).
\end{equation*}
Clearly, both $d_1$ and $d_2$ are non-negative and symmetric, and $d_m(x,y)=0$ ($m=1,2$) if and only if $x=y$. Since $\tan^{-1}$ is increasing and concave function on $[0,\infty)$, we see that both $d_1$ and $d_2$ obey the triangle inequality. Thus, $d_1$ and $d_2$ are indeed metrics on $\mathbb R^2$.

\begin{lemma}\label{sum of metrics is not GH}
The spaces $(\mathbb R^2, d_1)$ and $(\mathbb R^2, d_2)$ are Gromov hyperbolic with $\delta=\pi/2$, but the space $(\mathbb R^2, d)$, $d=d_1+d_2$, is not Gromov hyperbolic.
\end{lemma}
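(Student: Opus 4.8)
The plan is to treat the two assertions by opposite mechanisms. The Gromov hyperbolicity of $d_1$ and $d_2$ will come from writing each as a genuinely tree-like metric plus a \emph{bounded} perturbation, whereas the failure of hyperbolicity for $d=d_1+d_2$ will come from the fact that, up to a bounded additive term, $d$ is the flat $\ell^1$-metric on $\mathbb{R}^2$.

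For the first assertion I would decompose $d_1=\rho_1+\sigma$, where $\rho_1(x,y)=|x_1-y_1|$ is the pullback of the Euclidean metric on $\mathbb{R}$ under the first-coordinate projection, and $\sigma(x,y)=\tan^{-1}(|x_2-y_2|)$ satisfies $0\le\sigma<\pi/2$. Since $\mathbb{R}$ is $0$-hyperbolic, $\rho_1$ obeys the four-point condition (\ref{four-point hyperbolicity condition}) with $\delta=0$: for any four points one of the two pairings on the right dominates $\rho_1(x,y)+\rho_1(z,v)$. The step that must not be fudged is the combination: the four-point condition does \emph{not} add across a sum of metrics in general (this is exactly what Lemma~\ref{sum of metrics is not GH} will exhibit). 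It works here only because $\sigma$ is bounded, so
$$
\sigma(x,y)+\sigma(z,v)\le\pi\le\big[\sigma(x,z)+\sigma(y,v)\big]+\pi ,
$$
and the \emph{same} pairing that realizes the $\rho_1$-maximum may be used for the $\sigma$-terms, the bounded contribution being absorbed into the additive constant. Adding the two inequalities yields (\ref{four-point hyperbolicity condition}) for $d_1$ with $2\delta=0+\pi$, i.e.\ $\delta=\pi/2$; the argument for $d_2$ is identical after interchanging the coordinates.

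For the second assertion I would exhibit, for each $t>0$, the four points
$$
x=(t,t),\qquad y=(-t,-t),\qquad z=(t,-t),\qquad v=(-t,t),
$$
the two diagonals of the axis-aligned square of half-side $t$. A direct computation of the six pairwise distances gives $d(x,y)=d(z,v)=4t+2\tan^{-1}(2t)$, while each of the four remaining distances equals $2t+\tan^{-1}(2t)$. Hence the three pairing sums are $d(x,y)+d(z,v)=8t+4\tan^{-1}(2t)$ and $d(x,z)+d(y,v)=d(x,v)+d(y,z)=4t+2\tan^{-1}(2t)$, so the defect in (\ref{four-point hyperbolicity condition}) for the tuple $(x,y,z,v)$ equals $4t+2\tan^{-1}(2t)$, which tends to $+\infty$ as $t\to\infty$. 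No finite $\delta$ can satisfy (\ref{four-point hyperbolicity condition}), so $(\mathbb{R}^2,d)$ is not Gromov hyperbolic.

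I expect the only genuine subtlety to lie in the first part: one must resist the temptation to claim that a sum of hyperbolic metrics is hyperbolic, and instead exploit the asymmetry between the unbounded, tree-like summand $\rho_1$ and the bounded summand $\sigma$. The second part is a routine computation once the square-diagonal configuration is chosen; the $\tan^{-1}$ terms are inessential there, since the $\ell^1$-part alone already forces a defect of order $t$.
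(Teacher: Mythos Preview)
Your proposal is correct and follows essentially the same approach as the paper: for Part~1 you both split $d_1$ into the $0$-hyperbolic pullback $|x_1-y_1|$ plus the bounded $\tan^{-1}$ term and absorb the latter into the constant $2\delta=\pi$; for Part~2 you both exploit the diagonals-versus-sides of an axis-aligned square to produce an unbounded defect. The only cosmetic difference is that the paper first passes to the taxicab metric via the rough similarity $d_T\le d\le d_T+\pi$ and then tests the square $(0,0),(t,t),(0,t),(t,0)$, whereas you compute $d$ directly on the centered square $(\pm t,\pm t)$; both computations are correct and yield the same conclusion.
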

\begin{proof}
Due to similarity between $d_1$ and $d_2$ it is enough to show that $(\mathbb R^2, d_1)$ is Gromov hyperbolic with $\delta=\pi/2$. First, observe that the Euclidean distance on $\mathbb R$ is Gromov hyperbolic with $\delta=0$. That is, for all $p,q,r,s\in\mathbb R$, we have
\begin{equation}\label{EuclideanZeroHyperbolic}
|p-q|+|r-s|\leq\big[|p-r|+|q-s|\big]\vee\big[|p-s|+|q-r|\big].
\end{equation}

Let $x=(x_1,x_2)$, $y=(y_1,y_2)$, $z=(z_1,z_2)$, and $v=(v_1,v_2)$ be arbitrary points in $\mathbb R^2$. Using (\ref{EuclideanZeroHyperbolic}) along with the fact that $\tan^{-1}(a)<\pi/2$ for all $a\in [0,+\infty)$, we obtain
\begin{equation*}
\begin{split}
d_1(x,y)+d_1(z,v)&=|x_1-y_1|+|z_1-v_1|+\tan^{-1}(|x_2-y_2|)+\tan^{-1}(|z_2-v_2|) \\
&\leq |x_1-y_1|+|z_1-v_1|+\frac {\pi}{2}+\frac {\pi}{2}\\
&\leq \big[|x_1-z_1|+|y_1-v_1|\big]\vee\big[|x_1-v_1|+|y_1-z_1|\big]+2\cdot\frac {\pi}{2}\\
&\leq \big[d_1(x,z)+d_1(y,v)\big]\vee\big[d_1(x,v)+d_1(y,z)\big]+2\cdot\frac {\pi}{2},
\end{split}
\end{equation*}
completing the proof of the first part.

Next, we show that $(\mathbb R^2, d)$ is not Gromov hyperbolic. Observe that $d$ is roughly similar to the {\em taxicab} metric. That is,
\begin{equation}\label{taxi}
d_T(x,y)\leq d(x,y)\leq d_T(x,y)+\pi\qquad\text{for\ all}\qquad x,y\in\mathbb R^2.
\end{equation} 
Here $d_T$ is the taxicab metric defined by $d_T(x,y)=|x_1-y_1|+|x_2-y_2|$. 
It is known that the taxicab metric is not Gromov hyperbolic. Indeed, for $t>0$ and 
$$
x=(0,0),\quad y=(t,t),\quad z=(0,t),\quad v=(t,0)
$$ 
we have
$$
d_T(x,y)+d_T(z,v)=2t,\quad d_T(x,z)+d_T(y,v)=t\quad\text{and}\quad d_T(x,v)+d_T(y,z)=t.
$$
Hence there exists no $\delta\geq 0$ such that
$$
d_T(x,y)+d_T(z,v)\leq\big[d_T(x,z)+d_T(y,v)\big]\vee\big[d_T(x,v)+d_T(y,z)\big]+2\delta
$$
for all $t>0$. Finally, it follows from (\ref{taxi}) that the space $(\mathbb R^2, d)$ is not Gromov hyperbolic, completing the proof.
\end{proof}


\end{document}